\newtheorem{theorem}{Theorem}[section]
\newtheorem{lemma}[theorem]{Lemma}
\newtheorem{corollary}[theorem]{Corollary}
\theoremstyle{definition}
\newtheorem{definition}[theorem]{Definition}
\theoremstyle{remark}
\newtheorem{remark}[theorem]{Remark}
\numberwithin{equation}{section}
\newfont{\kh}{msbm10}
\begin{document}
\title[Some remarks on derivations]{Some remarks on derivations
on the algebra of operators in Hilbert pro-C*-bimodules}
\author{Kh. Karimi}
\address{Khadijeh Karimi,
\newline Department of Mathematics, Shahrood University of Technology, P.
O. Box 3619995161-316, Shahrood, Iran.}
\email{kh\underline{\space}karimi5005@yahoo.com}
\author{K. Sharifi}
\address{Kamran Sharifi,
\newline Department of Mathematics, Shahrood University of Technology, P.
O. Box 3619995161-316, Shahrood, Iran.}
\email{sharifi.kamran@gmail.com}


\subjclass[2010]{Primary 46L08; Secondary 46L05, 46H25, 47B47.}
\keywords {Hilbert pro-C*-bimodule, pro-C*-algebra, derivation, compact operator.}

\begin{abstract}
Suppose $A$ is a pro-C*-algebra. Let $L_{A}(E)$ be the pro-C*-algebra of adjointable operators on a
Hilbert $A$-module $E$ and let $K_{A}(E)$ be the closed two sided $*$-ideal of all compact
operators on $E$. We prove that if $E$ be a full Hilbert $A$-module, the innerness of derivations on $K_{A}(E)$
implies the innerness of derivations on $L_{A}(E)$.
We show that if $A$ is a commutative pro-C*-algebra and $E$ is a Hilbert $A$-bimodule then every derivation
on $K_{A}(E)$ is zero.
Moreover, if $A$ is a commutative $\sigma$-C*-algebra and $E$ is a Hilbert $A$-bimodule
then every derivation on $L_{A}(E)$ is zero, too.
\end{abstract}
\maketitle

\section{Introduction}
A {\it derivation} on an algebra $A$ is a linear mapping $D$ from $A$ into itself,
such that $D(ab)=D(a)b+aD(b)$ for all $a,b\in A$. We say that $D$ is {\it inner}
if there exists $x\in A$ such that $D(a)=[a,x]=ax-xa$ for every $a\in A.$
An important question in the theory of derivations is that on which algebras and
under what conditions, a derivation is inner or is identically zero. In 1955,
Singer and Wermer \cite{SM} proved that every derivation on a commutative
Banach algebra maps the algebra into its radical, in particular, every
continuous derivation on a semisimple algebra is identically zero.
It is well-known that the Singer-Wermer theorem is not valid for general topological
algebras \cite [Corollary 2.2.4]{Sakai}.  In 1992, Becker \cite{Bec}
studied derivations on a special class of topological algebras, known
as pro-C*-algebras, and proved that the Singer-Wermer theorem is
true for commutative pro-C*-algebras. In this paper we investigate derivations on
the pro-C*-algebra $L_A(E)$ of all adjointable operators on a Hilbert pro-C*-bimodule $E$.

Let us quickly recall the definition of pro-C*-algebras and Hilbert modules over them.
A pro-C*-algebra is a complete Hausdorff complex
topological $*$-algebra $A$ whose topology is determined by its continuous C*-seminorms in the
sense that the net $\{a_{i}\}_{i \in I}$ converges to $0$ if and only
if the net $\{p(a_i)\}_{i \in I}$ converges to $0$ for every
continuous C*-seminorm $p$ on $A$. For example the algebra $C(X)$ of all continuous
complex valued functions on a compactly generated space or a CW complex $X$, the
cartesian product $\prod _{\alpha\in I}A_{\alpha}$ of C*-algebras $A_{\alpha}$
with the product topology are pro-C*-algebras \cite[\S 7.6]{Fra}. A {\it $\sigma$-C*-algebra}
is a pro-C*-algebra whose topology is determined by a countable family of C*-seminorms.
Pro-C*-algebras appear in the study
of certain aspects of C*-algebras such as tangent algebras of C*-algebras, domain of
closed $*$-derivations on C*-algebras, multipliers of
Pedersen's ideal, and noncommutative analogues of classical Lie groups.
These algebras were first introduced by Inoue \cite{ino} who called them
locally C*-algebras and studied more in \cite{Apostol, Fra, phil1}
with different names. A right {\it pre-Hilbert module} over a pro-C*-algebra
$A$ is a right $A$-module $E$,
compatible with the complex algebra structure, equipped with an
$A$-valued inner product $\langle \cdot , \cdot \rangle_{A}:E
\times E \to A \, \ (x,y) \mapsto \langle x,y
\rangle_{A}$, which is $A$-linear in the second variable $y$
and has the properties:
\[
 \langle x,y \rangle_{A}=\langle y,x \rangle_{A}^{*} , \ {\rm and} \
\langle x,x \rangle_{A} \geq 0 \ \ {\rm with} \
{\rm equality} \ {\rm if} \ {\rm and} \ {\rm only} \
{\rm if} \ x=0.
\]

A right pre-Hilbert $A$-module $E$ is a right Hilbert $A$-module if $E$ is complete with respect to the
topology determined by the family of seminorms $\{
\overline{p}^{A} \}_{p \in S(A)}$  where $\overline{p}^{A}
(\xi) = \sqrt{ p( \langle \xi, \xi \rangle_{A})}$, $ \xi \in E$.
The notion of a left Hilbert $A$-module is defined in an analogous way.
Denote by $\langle E,E\rangle_{A}$ the closure of the linear span of all
$\langle x,y\rangle_{A}, x,y\in E$. We call $E$ is \textit{full} if $\langle E,E\rangle_{A}=A.$
One can always consider any right Hilbert module over pro-C*-algebra $A$ as a
full Hilbert module over pro-C*-algebra $\langle E,E\rangle_{A}$. Hilbert
modules over pro-C*-algebras have been studied in the book \cite{mar1} and the papers \cite{Joitacrossedpro, phil1, SHAMal}.

 In 1992, Becker \cite[Proposition 2]{Bec} proved that every derivation on a
 pro-C*-algebra is continuous. He also proved that commutative pro-C*-algebras
 have no nonzero derivations \cite[Corollary 3]{Bec}. In 1995,
 Phillips \cite[Theorem 3]{phil2} proved that every derivation on a
 pro-C*-algebra is approximately inner. A result of J. Ringrose states that
 every derivation of a C*-algebra $A$ into a Banach $A$-bimodule is continuous.
 Weigt and Zarakas \cite{Zar1} proved some extensions of this result
 in the context of pro-C*-bimodules and complete locally convex bimodules.
 In \cite{Zar}, Zarakas introduced the notion of a Hilbert pro-C*-bimodule
 and proved that every derivation of a pro-C*-algebra into a Hilbert
 pro-C*-bimodule is continuous. For a detailed survey of derivations of pro-C*-algebras, see \cite{FWZ}.

 Let $A$ be a pro-C*-algebra and $E$ be a right Hilbert module over $A$.
 A map $T:E\to E$ is called  adjointable if there exists a map $T^{*}:E\to E$
 with the property $\langle Tx,y\rangle_{A}=\langle x,T^{*}y\rangle_{A}$
 for all $x,y \in E$. The set $L_{A}(E)$ of all adjointable maps on $E$ is a
 pro-C*-algebra with the topology defined by the family of C*-seminorms $\{\tilde{p}\}_{p\in S(A)},$
 in which, $\tilde{p}(T)=\sup\{ \overline{p}^{A}(Tx):  x\in E, \overline{p}^{A}(x)\leq 1\}$.
 The set of all compact operators $K_{A}(E)$ on $E$ is defined as
 the closure of the set of all finite linear combinations of the operators
$\{\theta_{x,y}:\ \theta_{x,y}(\xi)=x\langle y,\xi \rangle_{A} ,\ x,y,\xi\in E\}.$
It is a  pro-C*-subalgebra and a two sided ideal of $L_{A}(E)$.

 Derivations on the algebra of operators on Hilbert C*-modules have been
 recently studied in the papers \cite{ MMJ, SN, SJM}. Li et al. \cite[Theorem 2.3]{Li}
 studied the relation between the innerness of derivations on $K_{A}(E)$ and $L_{A}(E)$
 and proved that if $A$ is a $\sigma$-unital commutative C*-algebra and $E$ is a full
 Hilbert $A$-module, then every derivation on $L_{A}(E)$ is inner if every
 derivation on $K_{A}(E)$ is inner. In this paper, we prove the above result
 in the context of pro-C*-algebras and we show that the assumptions
 of $\sigma$-unitality and commutativity of $A$ are not required in this theorem.
 We also consider derivations on the algebra of operators
 of Hilbert pro-C*-bimodules and we prove that if $A$ is a commutative
 pro-C*-algebra and $E$ is a Hilbert $A$-bimodule, then every derivation
 on $K_{A}(E)$ is zero. Furthermore, we show that if $A$ is a
 commutative $\sigma$-C*-algebra and $E$ is a Hilbert $A$-bimodule,
 then every derivation on $L_{A}(E)$ is zero.


\section{Preliminaries}
Let $A$ be a pro-C*-algebra and let $S(A)$ be the set of all
continuous C*-seminorms on $A$ and $p\in S(A)$. We set
$N_{p}=\{a\in A:\ p(a)=0\}$ then $A_p=A/N_{p}$ is a C*-algebra in the norm induced
by $p$. For $p,q\in S(A)$ with $p\geq q$, the surjective
morphisms $\pi_{pq}: A_p \to A_q$ defined by $\pi_{pq}(a+N_{p})=a+N_{q}$
induce the inverse system $\{A_p; \pi_{pq}\}_{p,q\in S(A), \, p\geq q}$
of C*-algebras and $A = \varprojlim_{p}A_p$, i.e.
the pro-C*-algebra $A$ can be identified with $\varprojlim_{p}A_p$.
The canonical map from $A$ onto $A_p$ is
denoted by $\pi_p$ and $a_{p}$ is reserved to denote $a+N_{p}$.
A morphism of pro-C*-algebras is a continuous morphism of $*$-algebras.
An isomorphism of pro-C*-algebras is a morphism of pro-C*-algebras
which possesses an inverse morphism of pro-C*-algebras.

An approximate unit of a pro-C*-algebra $A$ is an increasing net $\{e_{i}\}_{i\in I}$
of positive elements of $A$ such that $p(e_i)\leq 1$ for all
$i\in I$ and $p\in S(A)$; $p(ae_i-a)\to 0$ and $p(e_ia-a)\to 0$ for all
$p\in S(A)$ and $a\in A$. Any pro-C*-algebra has an approximate unit.

Let $E$ be a left and right Hilbert module over a pro-C*-algebra $A$.
Denote by $_{A}\langle \cdot , \cdot \rangle$ and $\langle \cdot , \cdot \rangle_{A}$
the left and right $A$-valued inner products on $E$. Then there are
two locally convex topologies defined on $E$. One denoted by $\tau^{A}$,
induced by the seminorms $\{\overline{p}^{A}\}_{p\in S(A)}$ corresponding
to its structure as a right Hilbert $A$-module and the other, denoted
by $^{A}\tau$, induced by the seminorms $\{^{A}\overline{p}\}_{p\in S(A)}$,
corresponding to its structure as a left Hilbert $A$-module.
Zarakas \cite[Corollary 3.2]{Zar} proved that if
$_{A}\langle x,y\rangle z=x\langle y,z \rangle_{A}$ for all $x,y,z\in E$
and we assume continuity of the left (resp. right) module action, in the sense that
\begin{align}\label{T}
\overline{p}^{A}(ax)\leq p(a)\overline{p}^{A}(x),\ \ \ ^{A}\overline{p}(xa)\leq {^{A}\overline{p}(x)p(a)},\ \  x\in E, a\in A,
\end{align}
then $\overline{p}^{A}(x)={^{A}\overline{p}(x)}$, for all $x\in E$ and
so two topologies $^{A}\tau$ and $\tau^{A}$ on $E$ coincide. Based on
this fact he defined the notion of Hilbert pro-C*-bimodule as follows.

\begin{definition}\label{def1}
Let $A$ be a pro-C*-algebra and $E$ be a left and right Hilbert $A$-module.
If condition \eqref{T} is satisfied and $_{A}\langle x,y\rangle z=x\langle y,z \rangle_{A}$
for all $x,y,z\in E$, then we say that $E$ is a Hilbert pro-C*-bimodule
over $A$ or a Hilbert $A$-bimodule.
\end{definition}

Suppose $E$ is a right Hilbert $A$-module. For each $p\in S(A), N_{p}^{E}=\{\xi\in E: \ \overline{p}^{A}(\xi)=0\}$
is a closed submodule of $E$ and $E_{p}=E/N_{p}^{E}$ is a right
Hilbert $A_{p}$-module with the action $(\xi+N_{p}^{E})\pi_{p}(a)=\xi a+N_{p}^{E}$
and the inner product $\langle\xi+N_{p}^{E},\eta+N_{p}^{E}\rangle_{A_{p}}=\pi_{p}(\langle\xi,\eta\rangle_{A}).$
The canonical map from $E$ onto $E_{p}$ is denoted by $\sigma_{p}^{E}$ and
$\xi_{p}$ is reserved to denote $\sigma_{p}^{E}(\xi).$
For $p,q\in S(A)$ with $p\geq q$, the surjective morphisms
$\sigma_{pq}^{E}:E_{p} \to E_q$ defined by
$\sigma_{pq}^{E}(\sigma_{p}^{E}(\xi))=\sigma_{q}^{E}(\xi)$
induce the inverse system $\{E_{p};\  A_p;\  \sigma_{pq}^{E},\ \pi_{pq} \}_{p,q\in S(A), \, p\geq q}$\,
of right Hilbert C*-modules.
In this case, $\varprojlim_{p}E_{p}$ is a right
Hilbert $A$-module which can be identified with $E$.
Let $E$ and $F$ be right Hilbert $A$-modules and $T:E\to F$ be an $A$-module map.
The module map $T$ is called bounded if for each $p\in S(A)$,
there is $k_p>0$ such that $\overline{p}^{A}_{F}(Tx)\leq k_{p}\ \overline{p}^{A}_{E}(x)$ for all $x\in E$.
The set $L_{A}(E,F)$ of all bounded adjointable $A$-module maps from $E$
into $F$ becomes a locally convex space with the topology defined by the
family of seminorms $\{\tilde{p}\}_{p\in S(A)}$, where $\tilde{p}(T)=\|(\pi_{p})_{*}(T)\|_{ L_{A_p}(E_p,F_p)}$
and $(\pi_{p})_{*}:L_{A}(E,F)\to L_{A_p}(E_p,F_p)$ is defined by
$(\pi_p)_{*}(T)(\xi+N_{p}^{E})=T\xi+N_{p}^{F}$, for
all $T\in L_{A}(E,F)$ and $ \xi\in E$. For $p,q\in S(A)$ with $p\geq q$,
the morphisms $(\pi_{pq})_{*}:L_{A_p}(E_p,F_p)\to L_{A_q}(E_q,F_q)$
defined by $(\pi_{pq})_{*}(T_{p})(\sigma_{q}^{E}(\xi))=\sigma_{pq}^{F}(T_p(\sigma_{p}^{E}(\xi)))$ induce
the inverse system $\{ L_{A_p}(E_p,F_p);\ (\pi_{pq})_{*}\}_{p,q\in S(A), \, p\geq q}$
of Banach spaces such that $\varprojlim_{p} L_{A_p}(E_p,F_p)$ can be identified to $L_{A}(E,F)$.
In particular, topologizing, $L_{A}(E,E)$ becomes a
pro-C*-algebra which is abbreviated by $L_{A}(E)$.
The set of all compact operators $K_{A}(E)$ on $E$ is defined as the closed
linear subspace of $L_A(E)$ spanned by
$\{\theta_{x,y}: \ \theta_{x,y}(\xi)=x\langle y,\xi\rangle ~ {\rm for~ all}~ x,y,\xi \in E\}$.
 This is a pro-C*-subalgebra and a two sided ideal of $L_{A}(E)$, moreover,
$K_{A}(E)$ can be identified to $\varprojlim_{p} K_{A_p}(E_p)$.

\section{Main results}
The following theorem states that the innerness of derivations on $K_{A}(E)$
implies the innerness of derivations
on $L_{A}(E)$. By this theorem, the assumption of $\sigma$-unitality and
commutativity of $A$ in \cite[Theorem 2.3]{Li} can be removed.
\begin{theorem}
Let $A$ be a pro-C*-algebra and let $E$ be a full right Hilbert $A$-module.
If every derivation on $K_{A}(E)$ is inner, then any derivation on $L_{A}(E)$ is also inner.
\end{theorem}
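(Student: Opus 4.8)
The plan is to take an arbitrary derivation $D$ on $L_{A}(E)$, push it down to the ideal $K_{A}(E)$, invoke the hypothesis there, and then show that the resulting correction term is forced to vanish by fullness of $E$. The first task is to check that $D$ actually \emph{restricts} to $K_{A}(E)$. I would begin by recording that $D$ is automatically continuous: $L_{A}(E)$ is a pro-C*-algebra, so by Becker's theorem \cite[Proposition 2]{Bec} every derivation on it is continuous. Fixing an approximate unit $\{u_{i}\}$ of the pro-C*-algebra $K_{A}(E)$, for $k\in K_{A}(E)$ one has $\tilde{p}(u_{i}k-k)\to 0$ for every $p\in S(A)$, so $u_{i}k\to k$ in $L_{A}(E)$, and continuity of $D$ yields
\[
D(k)=\lim_{i}D(u_{i}k)=\lim_{i}\big(D(u_{i})k+u_{i}D(k)\big).
\]
Since $K_{A}(E)$ is a two sided ideal, both $D(u_{i})k$ and $u_{i}D(k)$ lie in $K_{A}(E)$, and as $K_{A}(E)$ is closed we conclude $D(k)\in K_{A}(E)$. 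Thus $D(K_{A}(E))\subseteq K_{A}(E)$ and $D$ restricts to a derivation $D_{0}$ on $K_{A}(E)$.

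Next I would apply the hypothesis and correct $D$ by an inner piece. Since $D_{0}$ is inner by assumption, there is $S\in K_{A}(E)$ with $D(k)=kS-Sk$ for all $k\in K_{A}(E)$. As $S\in K_{A}(E)\subseteq L_{A}(E)$, the map $\delta_{S}(T)=TS-ST$ is an inner derivation of $L_{A}(E)$, and the difference $\Delta:=D-\delta_{S}$ is once more a derivation on $L_{A}(E)$ that vanishes identically on $K_{A}(E)$. It then suffices to prove $\Delta=0$, for in that case $D=\delta_{S}$ is inner.

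To kill $\Delta$ I would exploit fullness. For $T\in L_{A}(E)$ and $k\in K_{A}(E)$ we have $Tk\in K_{A}(E)$, so the Leibniz rule together with $\Delta|_{K_{A}(E)}=0$ gives
\[
0=\Delta(Tk)=\Delta(T)k+T\Delta(k)=\Delta(T)k,
\]
hence $R:=\Delta(T)$ satisfies $Rk=0$ for every $k\in K_{A}(E)$. Taking $k=\theta_{x,y}$ and using $R\theta_{x,y}=\theta_{Rx,y}$ gives $(Rx)\langle y,\xi\rangle_{A}=0$ for all $x,y,\xi\in E$. Because $E$ is full, the elements $\langle y,\xi\rangle_{A}$ span a dense subspace of $A$, and the estimate $\overline{p}^{A}((Rx)a)\le \overline{p}^{A}(Rx)\,p(a)$ lets me pass to the closure, so $(Rx)a=0$ for every $a\in A$. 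Applying an approximate unit $\{e_{j}\}$ of $A$, we have $(Rx)e_{j}\to Rx$ while each $(Rx)e_{j}=0$, whence $Rx=0$ for all $x\in E$ and therefore $R=\Delta(T)=0$. As $T$ was arbitrary, $\Delta=0$ and $D$ is inner.

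The hard part, I expect, is Step~1: the invariance $D(K_{A}(E))\subseteq K_{A}(E)$ is exactly what makes the restriction $D_{0}$ well defined, and it hinges on combining automatic continuity of $D$ with the existence of an approximate unit for $K_{A}(E)$; in the pro-C*-setting one must be careful that the inverse-limit topology on $L_{A}(E)$ genuinely gives $u_{i}k\to k$ and that $D$ is continuous for precisely this topology. Once invariance is secured, the only remaining delicacy is the fullness argument of Step~3, namely justifying the passage from a dense spanning set of inner products to all of $A$ by means of the module seminorm estimate and the approximate unit.
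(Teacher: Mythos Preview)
Your proof is correct and follows the same three-step architecture as the paper: show that $D$ leaves $K_{A}(E)$ invariant, invoke the hypothesis to obtain an inner derivation on $K_{A}(E)$, and then use fullness of $E$ to extend the identity $D=\delta_{S}$ to all of $L_{A}(E)$.

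The technical implementations differ, however. For the invariance step, you use an approximate unit of $K_{A}(E)$ together with the ideal property and closedness; the paper instead invokes Cohen factorization $x=za$ and approximates $a$ by finite sums of inner products, exploiting fullness already here to write each $\theta_{za_{n},y}$ as a product of rank-one operators. Your argument is shorter and does not require fullness at this stage. For the extension step, you subtract off $\delta_{S}$, obtain $\Delta(T)k=0$ for all $k\in K_{A}(E)$, and then kill $\Delta(T)$ via fullness plus an approximate unit of $A$; the paper works directly with $D(S)\theta_{x,y}=(TS-ST)\theta_{x,y}$ and again uses Cohen factorization $w=za$ with $a$ approximated by inner products. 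Both routes reach the same conclusion; your use of approximate units is somewhat more streamlined, while the paper's approach is more explicit about the module-level computations.
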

\begin{proof}
Let $D$ be a derivation on $L_{A}(E)$. We claim that $D$ maps $K_{A}(E)$ to itself.
To see this, suppose $x,y\in E$ and apply \cite[Corollary 1.3.11]{mar1}, there exist $a\in A$
and $z\in E$ such that $x=za$. Since $E$ is full, there exists a sequence $\{a_{n}\}$
in $\langle E,E \rangle$ such that $p(a_{n}-a) \to 0$ for all $p\in S(A)$.
Let $a_{n}=\sum_{i=1}^{k_{n}}\langle x_{in},y_{in}\rangle$ where $x_{in},y_{in}\in E$. We have

\begin{eqnarray*}
D(\theta_{za_{n},y})=\sum_{i=1}^{k_{n}}D(\theta_{z\langle x_{in},y_{in} \rangle,y}) &=&
\sum_{i=1}^{k_{n}}D(\theta_{z, x_{in}}\theta_{y_{in},y})
\\ &=& \sum_{i=1}^{k_{n}}D(\theta_{z, x_{in}})\theta_{y_{in},y}+
\sum_{i=1}^{k_{n}}\theta_{z, x_{in}}D(\theta_{y_{in},y})\in K_{A}(E).
\end{eqnarray*}
Let $p\in S(A)$ and $w\in E$ such that $\overline{p}^{A}(w)\leq 1$ then
\begin{eqnarray*}
\overline{p}^{A}((\theta_{za_{n},y} - \theta_{za,y})(w))^{2} &=&
p(\langle y,w \rangle^{*})p(\langle za_{n}-za , za_{n}-za \rangle) p(\langle y,w \rangle)
\\ &\leq & \overline{p}^{A}(y)^{2}\overline{p}^{A}(z)^{2}p(a_{n}-a)^{2}
\end{eqnarray*}
which implies that $\tilde{p}(\theta_{za_{n},y}-\theta_{x,y})\to 0$.
Consequently, $\theta_{za_{n},y}\to \theta_{x,y}$ in $L_{A}(E)$ with
respect to the topology generated by $\{\tilde{p}\}_{p\in S(A)}$.
It follows from \cite[Proposition 2]{Bec} that $D$ maps $K_{A}(E)$ into itself.
Since every derivation on $K_A(E)$ is inner, there exists $T\in K_{A}(E)$
such that $D(K)=TK-KT$, for all $K\in K_{A}(E)$. Let $S\in L_{A}(E)$
and $x,y\in E$ then $D(S\theta_{x,y})=TS\theta_{x,y}-S\theta_{x,y}T$. The latter equality and the fact that
$$D(S\theta_{x,y})=D(S)\theta_{x,y}+SD(\theta_{x,y})=D(S)\theta_{x,y}+ST\theta_{x,y}-S\theta_{x,y}T,$$
imply $D(S)\theta_{x,y}=TS\theta_{x,y}-ST\theta_{x,y}$. Let $w\in E$
then there exist $z\in E$ and  $a\in A$ such that $w=za$. Since $E$ is full,
there exists a sequence $\{a_{n}\}$ in $\langle E,E \rangle$ such
that $p(a_{n}-a)\to 0$ for all $p\in S(A)$.
Let $a_{n}=\sum_{i=1}^{k_{n}}\langle x_{in},y_{in}\rangle$ then
\begin{eqnarray*}
D(S)(za_{n})=\sum_{i=1}^{k_{n}}D(S)(z\langle x_{in},y_{in}\rangle) &=& \sum_{i=1}^{k_{n}}D(S)\theta_{z,x_{in}}(y_{in})
\\ &=& \sum_{i=1}^{k_{n}}(TS\theta_{z,x_{in}}-ST\theta_{z,x_{in}})(y_{in})
\\ &=& (TS-ST)(za_{n}).
\end{eqnarray*}
For all $p\in S(A)$, $\overline{p}_{A}(za_{n}-za)\leq \overline{p}_{A}(z)p(a_{n}-a)$
and so $za_{n}\to za$ in $E$. Hence, continuity of $D(S)$ and $TS-ST$ implies
that $D(S)(w)=(TS-ST)(w)$ for all $w\in E$, that is, $D(S)=TS-ST$.
\end{proof}

\begin{remark}\label{rem}
Let $A$ be a $\sigma$-C*-algebra and let $E$ be a Hilbert $A$-bimodule. Denote by $_{A}I$
the closure of the two-sided ideal
${\rm span} \{_{A}\langle x,y \rangle: x,y\in E \}$. By \cite[Lemma 6.4]{Zar}, $_{A}I$
has an approximate unit $\{u_{\alpha}\}$ with
$u_{\alpha}=\sum_{i=1}^{n}{_{A}\langle x_{i}^{\alpha},x_{i}^{\alpha}\rangle}$,
where $\alpha=\{y_{1}, ... , y_{n}\}\subset E$ ranges over finite subsets
of $E$ and $x_{i}^{\alpha}=(\sum_{j=1}^{n} {_{A}\langle} y_{j},y_{j}\rangle
+\frac{1}{n} \it{1})^{-\frac{1}{2}} \, y_{i}$, $i=1, ... , n$, in which $1$ is
the identity of the unitization of $A$.
Zarakas considered a realization of $K_{A}(E)$, through
the closed two-sided $*$-ideal $_{A}I$.
Indeed, he proved that $ {_{A}I}\simeq K_{A}(E)$ is a topological $*$-isomorphism \cite[Theorem 6.5]{Zar}.
This theorem can be regarded as an extension of a result of Brown, Mingo
and Shen \cite[proposition 1.10]{BMS} for Hilbert C*-bimodules. In view of this
fact, it is easy to see that $\{T_{\alpha}\}$, where
$T_{\alpha}=\sum_{i=1}^{n}\theta_{x_{i}^{\alpha},x_{i}^{\alpha}}$, is an approximate unit for $K_{A}(E)$.
\end{remark}

\begin{lemma} \label{lem}
Every derivation of a pro-C*-algebra annihilates its center.
\end{lemma}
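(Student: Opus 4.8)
The plan is to reduce the statement to Becker's theorem that a commutative pro-C*-algebra admits no nonzero derivations \cite[Corollary 3]{Bec}. Let $D$ be a derivation of the pro-C*-algebra $A$ and write $Z$ for its center. First I would check that $Z$ is itself a commutative pro-C*-algebra. It is commutative by definition, and it is a closed $*$-subalgebra of $A$: writing $Z=\bigcap_{a\in A}\ker(z\mapsto za-az)$ exhibits it as an intersection of kernels of continuous linear maps (each satisfies $p(za-az)\leq 2\,p(a)\,p(z)$ for every $p\in S(A)$), hence $Z$ is closed; moreover $Z$ is $*$-closed, since applying the adjoint to $za=az$ and then replacing $a$ by $a^{*}$ yields $az^{*}=z^{*}a$, so $z^{*}\in Z$ whenever $z\in Z$. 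Being a closed $*$-subalgebra, $Z$ inherits a pro-C*-algebra structure from the restrictions of the continuous C*-seminorms of $A$.

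The crucial step will be to show that $D$ maps $Z$ into itself, so that $D|_{Z}$ is a derivation of the commutative pro-C*-algebra $Z$. It suffices to treat self-adjoint central elements, because every $z\in Z$ decomposes as $z=h+ik$ with $h,k\in Z$ self-adjoint (using that $z^{*}\in Z$). Given a self-adjoint $h\in Z$ and an arbitrary $a\in A$, I would apply $D$ to the identity $ha=ah$ and use the Leibniz rule to obtain $D(h)a+hD(a)=D(a)h+aD(h)$; since $h$ is central we have $hD(a)=D(a)h$, and cancelling these terms leaves $D(h)a=aD(h)$ for all $a\in A$, that is, $D(h)\in Z$. By linearity $D(Z)\subseteq Z$, and the Leibniz rule then makes $D|_{Z}\colon Z\to Z$ a derivation of a commutative pro-C*-algebra (its continuity, if needed, is automatic from \cite[Proposition 2]{Bec}). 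Invoking \cite[Corollary 3]{Bec} forces $D|_{Z}=0$, which is exactly the assertion that $D$ annihilates the center.

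The hard part will be step two, namely verifying cleanly that $D$ genuinely preserves the center and that $Z$ is a closed $*$-subalgebra carrying a separating family of C*-seminorms, so that Becker's commutative result applies verbatim; once these structural points are in place the Leibniz computation closes the argument. As an alternative that avoids citing \cite[Corollary 3]{Bec}, one could argue directly for each self-adjoint central $h$: since $D(h)$ is central it commutes with $h$, so $D(h^{n})=n\,h^{n-1}D(h)$, and passing to the unitization and differentiating the convergent exponential series gives $D\!\left(\exp(ith)\right)=it\,\exp(ith)\,D(h)$ for $t\in\R$. Because $\exp(ith)$ is unitary, each C*-seminorm satisfies $p\!\left(D(\exp(ith))\right)=|t|\,p(D(h))$, which grows linearly in $t$; comparing this with the bound supplied by continuity of $D$ (where the unitary has seminorm one) forces $p(D(h))=0$ for every $p\in S(A)$, hence $D(h)=0$.
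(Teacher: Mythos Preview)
Your argument is correct, but it proceeds quite differently from the paper. The paper pushes the derivation down to the C*-quotients: for each $p\in S(A)$ it constructs a derivation $D_p$ on the C*-algebra $A_p$ (using \cite[Lemma~1]{Bec} to check well-definedness on $N_p$), observes that a central element of $A$ maps to a central element of each $A_p$, and then invokes Kadison's theorem \cite{Kad} that derivations on C*-algebras annihilate their centers; separation by the seminorms finishes the job. You instead stay in $A$, show that $Z$ is a closed $*$-subalgebra (hence a commutative pro-C*-algebra) and that $D$ preserves $Z$, and then cite Becker's \cite[Corollary~3]{Bec} directly. Your route is more intrinsic and avoids unpacking the inverse limit, at the cost of relying on Becker's commutative result (which itself goes through the quotients). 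One small redundancy: your Leibniz computation $D(h)a+hD(a)=D(a)h+aD(h)$ followed by $hD(a)=D(a)h$ already gives $D(h)\in Z$ for \emph{any} central $h$, so the preliminary reduction to self-adjoint elements is unnecessary. Your alternative exponential argument also works, provided one extends $D$ to the unitization by $D(1)=0$ and notes that this extension is again a derivation on a pro-C*-algebra, hence continuous by \cite[Proposition~2]{Bec}.
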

\begin{proof}
Let $D$ be a derivation on a pro-C*-algebra $A$. For each $p\in S(A)$, we
consider the linear map  $D_{p}:A_{p}\to A_{p}$ defined by $D_{p}(\pi_{p}(a))=\pi_{p}(D(a))$,
where $a\in A$. Let $a\in N_{p}$. By \cite [Lemma 1]{Bec}, there exist
elements $a_{k}\in N_{p}, k=1,...,4$ such that $a=\sum_{k=1}^{4}i^{k}a_{k}^{2}$.
Since $p(D(a))\leq \sum_{k=1}^{4}p(D(a_{k})a_{k}+a_{k}D(a_{k}))=0$,
we have $D(a)\in N_{p}$ which shows that $D_{p}$ is well-defined.
It is easy to see that $D_{p}$ is a derivation on $A_{p}$.
Denote by $Z(A)$ the center of $A$ and let $a\in Z(A)$.
Then $\pi_{p}(a)$ is
in the center of $A_{p}$, for every $p\in S(A)$. It is well-known
that every derivation on a C*-algebra annihilates its center
\cite[Theorem 2]{Kad}, so $D_{p}(\pi_{p}(a))=0$
for every $p\in S(A)$. It follows that $\pi_{p}(D(a))=0$
for all $p\in S(A)$. Since $S(A)$ is a separating family of C*-seminorms,
we have $D(a)=0$.
\end{proof}

\begin{remark}\label{prop}
Let $A$ be a pro-C*-algebra and let $E$ be a Hilbert $A$-bimodule. If $A$ is commutative, then $K_{A}(E)$ is commutative.
Indeed, for every $x,y,z,u,v\in E$ we have
\begin{eqnarray*}
\theta_{x,y}\theta_{u,v}(z)=\theta_{x.\langle y,u \rangle_{A},v}(z)=
x.\langle y,u \rangle_{A}\langle v,z \rangle_{A}&=&{_{A}\langle x,y \rangle}u.\langle v,z \rangle_{A}
\\ &=& {_{A}\langle x,y \rangle}{_{A}\langle u,v \rangle}.z
\end{eqnarray*}
and
\begin{eqnarray*}
\theta_{u,v}\theta_{x,y}(z)=\theta_{u.\langle v,x \rangle_{A},y}(z)=
u.\langle v,x \rangle_{A}\langle y,z \rangle_{A}&=&{_{A}\langle u,v \rangle}x.\langle y,z \rangle_{A}
\\ &=&{_{A}\langle u,v \rangle}{_{A}\langle x,y \rangle}.z.
\end{eqnarray*}
Therefore $\theta_{x,y}\theta_{u,v}=\theta_{u,v}\theta_{x,y}$ for all $x,y\in E$, which implies
the commutativity of $K_{A}(E)$.
\end{remark}

\begin{corollary}\label{cor}
Let $A$ be a pro-C*-algebra and let $E$ be a Hilbert $A$-bimodule. If $A$ is commutative then every derivation on $K_{A}(E)$ is zero.
\end{corollary}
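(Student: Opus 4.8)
The plan is to chain together the two results that immediately precede this statement. First I would invoke Remark~\ref{prop}: because $A$ is commutative and $E$ is a Hilbert $A$-bimodule, the computation there shows $\theta_{x,y}\theta_{u,v}=\theta_{u,v}\theta_{x,y}$ for all $x,y,u,v\in E$, and hence $K_{A}(E)$ is a commutative pro-C*-algebra. The role of this step is purely structural: it converts the hypothesis on $A$ into a statement about $K_{A}(E)$ itself.

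The bridge is the elementary observation that a commutative algebra coincides with its own center, so $Z(K_{A}(E))=K_{A}(E)$. Since $K_{A}(E)$ is itself a pro-C*-algebra (it is the pro-C*-subalgebra of $L_{A}(E)$ spanned by the rank-one operators, as recorded in the Preliminaries), Lemma~\ref{lem} applies verbatim to any derivation $D$ on $K_{A}(E)$ and yields $D(a)=0$ for every $a$ in its center. As the center is all of $K_{A}(E)$, the derivation $D$ vanishes identically.

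The only point that requires a moment's care is the applicability of Lemma~\ref{lem}, which is phrased for derivations of a pro-C*-algebra into itself; here one must be sure that $K_{A}(E)$ qualifies as a pro-C*-algebra (equivalently, that $K_{A}(E)\simeq\varprojlim_{p}K_{A_{p}}(E_{p})$ is complete in its induced family of C*-seminorms), which is exactly the content of the Preliminaries. Beyond that there is no genuine obstacle: the corollary is a direct combination of Remark~\ref{prop} with Lemma~\ref{lem}, the former supplying commutativity and the latter killing derivations on the center.
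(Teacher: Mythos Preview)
Your argument is correct and matches the paper's own proof exactly: combine Remark~\ref{prop} (commutativity of $K_{A}(E)$) with Lemma~\ref{lem} (derivations annihilate the center), using that $K_{A}(E)$ is itself a pro-C*-algebra. There is nothing to add.
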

The proof follows from Proposition \ref{prop}, Lemma \ref{lem} and the fact that $K_{A}(E)$ is a pro-C*-algebra.

\begin{theorem}
Let $A$ be a commutative $\sigma$-C*-algebra and let $E$ be a Hilbert $A$-bimodule. Then every derivation on $L_{A}(E)$ is zero.
\end{theorem}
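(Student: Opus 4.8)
The plan is to restrict $D$ to the compact operators, where the earlier results force it to vanish, and then to propagate this vanishing to all of $L_{A}(E)$ by pairing with an approximate unit of $K_{A}(E)$. First I would reduce to the full case: since $E$ may be regarded as a full Hilbert module over the commutative $\sigma$-C*-algebra $\langle E,E\rangle_{A}$ without changing either $L_{A}(E)$ or $K_{A}(E)$, there is no loss of generality in assuming $E$ is full. With fullness available, the factorization argument from the first part of the proof of the innerness theorem above (the first theorem of this section) shows that $D$ maps $K_{A}(E)$ into itself; hence $D|_{K_{A}(E)}$ is a genuine derivation of the pro-C*-algebra $K_{A}(E)$.

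Next, since $A$ is commutative and $E$ is a Hilbert $A$-bimodule, Corollary \ref{cor} applies to this restriction and gives $D|_{K_{A}(E)}=0$; in particular $D(\theta_{x,y})=0$ for all $x,y\in E$. The decisive consequence is that $D$ annihilates the whole ideal, so for any $S\in L_{A}(E)$ and any $K\in K_{A}(E)$ the product $SK$ again lies in $K_{A}(E)$ and the Leibniz rule collapses:
\begin{equation*}
0=D(SK)=D(S)K+SD(K)=D(S)K.
\end{equation*}
Thus $D(S)K=0$ for every compact $K$.

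To conclude I would feed in the approximate unit $\{T_{\alpha}\}$ of $K_{A}(E)$ furnished by Remark \ref{rem} (this is exactly where the $\sigma$-C*-algebra hypothesis is consumed), obtaining $D(S)T_{\alpha}=0$ for all $\alpha$. The remaining point is to evaluate on a vector: writing $T_{\alpha}=\sum_{i}\theta_{x_{i}^{\alpha},x_{i}^{\alpha}}$ and using the bimodule identity ${}_{A}\langle x,y\rangle z=x\langle y,z\rangle_{A}$, one gets $T_{\alpha}\xi=u_{\alpha}\xi$, where $\{u_{\alpha}\}$ is the approximate unit of ${}_{A}I$; the approximate-unit estimates together with the coincidence $\overline{p}^{A}={}^{A}\overline{p}$ of the two seminorm families then yield $\overline{p}^{A}(T_{\alpha}\xi-\xi)\to 0$ for every $\xi\in E$ and every $p\in S(A)$. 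Since $D(S)$ is continuous, $D(S)\xi=\lim_{\alpha}D(S)(T_{\alpha}\xi)=0$ for all $\xi$, whence $D(S)=0$ and $D$ is identically zero. I expect the main obstacle to be precisely this last convergence $T_{\alpha}\xi\to\xi$ in $E$: it is the step where the Hilbert $A$-bimodule structure, the identification ${}_{A}I\simeq K_{A}(E)$, and the equality of the left and right topologies are genuinely needed, and it is what fails for a merely one-sided Hilbert module. A secondary point requiring care is the fullness reduction guaranteeing $D(K_{A}(E))\subseteq K_{A}(E)$.
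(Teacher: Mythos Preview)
Your proof is correct and follows the same three-stage skeleton as the paper: show $D$ preserves $K_{A}(E)$, invoke Corollary~\ref{cor} to kill $D$ on compacts, and then deduce $D(S)K=0$ for every $K\in K_{A}(E)$. The two implementations diverge at the bookends.

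For the first step, the paper does not reduce to the full case. It uses the approximate unit $\{T_{\alpha}\}$ of Remark~\ref{rem} directly, observing that $D(\theta_{x,y}T_{\alpha})\in K_{A}(E)$ by Leibniz and then passing to the limit via continuity of $D$. Your fullness reduction together with the factorization argument from the first theorem of the section is equally valid, and has the mild advantage of not consuming the $\sigma$-C*-hypothesis or the bimodule structure at this stage.

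For the final step the paper sidesteps the convergence $T_{\alpha}\xi\to\xi$ altogether by a slicker device: from $D(S)\theta_{x,y}=0$ for all $x,y$, choose $y=D(S)x$ and evaluate at $D(S)x$ to obtain $D(S)(x)\langle D(S)x,D(S)x\rangle_{A}=0$, whence $|D(S)x|^{4}=0$ and $D(S)x=0$. This uses only the right Hilbert module structure, so the step you singled out as the main obstacle simply does not arise. Your route through $T_{\alpha}\xi=u_{\alpha}\xi\to\xi$ (via the bimodule identity and the equality $\overline{p}^{A}={}^{A}\overline{p}$) does work, but the paper's argument is shorter and localizes the $\sigma$-C*-hypothesis to the single task of proving $D(K_{A}(E))\subseteq K_{A}(E)$.
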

\begin{proof}
Let $D$ be a derivation on $L_{A}(E)$ and let $\{T_{\alpha}\}$ be as in Remark \ref{rem}.
First we show that $D$ maps $K_{A}(E)$ to itself. Let $x,y\in E$ then $D(\theta_{x,y}T_{\alpha})=\sum _{i=1}^{n}D(\theta_{x,y}\theta_{x_{i}^{\alpha},x_{i}^{\alpha}})=
\sum_{i=1}^{n}D(\theta_{x,y})\theta_{x_{i}^{\alpha},x_{i}^{\alpha}}+
\sum_{i=1}^{n}\theta_{x,y}D(\theta_{x_{i}^{\alpha},x_{i}^{\alpha}})
\in K_{A}(E)$. Continuity of $D$ and closeness of $K_{A}(E)$ in $L_{A}(E)$
imply that $D(\theta_{x,y})\in K_{A}(E)$ for all $x,y\in E$. Consequently,
the restriction of $D$ to $K_{A}(E)$ is a derivation on $K_{A}(E)$ which
is zero by Corollary \ref{cor}. Let $S\in L_{A}(E)$ then
$D(S)\theta_{x,y}=D(S\theta_{x,y})-SD(\theta_{x,y})=0$ for all $x,y\in E$.
In particular, $D(S)\theta_{x,D(S)x}=0$ and so
\begin{eqnarray*}
D(S)(x)\langle D(S)(x),D(S)(x) \rangle &=& D(S)(x\langle D(S)(x),D(S)(x) \rangle)
\\ &=& D(S)\theta_{x,D(S)(x)}(D(S)(x))
\\ &=& 0.
\end{eqnarray*}

Therefore, $|D(S)(x)|^{4}=\langle D(S)(x)\langle D(S)(x),D(S)(x) \rangle,
D(S)(x)\rangle=0$, i.e. $D(S)(x)=0$, for all $x\in E$.
\end{proof}

{\bf Acknowledgement}:
The authors would like to thank
the referee for his/her careful reading and useful comments.


\begin{thebibliography}{99}
\bibitem{Apostol} C. Apostol, b*-algebras and their representaion,
{\it J. London Math. Soc.} {\bf 3} (1971), 30--38.

\bibitem{Bec} R. Becker, Derivations on LMC*-algebras, {\it Math. Nachr.} {\bf 155} (1992), 141--149.

\bibitem{BMS} L. G. Brown, J. A. Mingo and N. T. Shen, Quasimultipliers and
embeddings of Hilbert C*-modules. {\it Canad. J. Math.} {\bf 46} (1994), 1150--1174.

\bibitem{Fra} M. Fragoulopoulou, {\it Topological algebras with involution},
 North Holland, Amsterdam, 2005.

\bibitem{FWZ} M. Fragoulopoulou, M. Weigt and I. Zarakas, Derivations of
locally convex *-algebras, {\it Extracta Math.} {\bf 26} (2011), 45--60.


\bibitem{ino} A. Inoue, Locally C*-algebras,
{\it Mem. Faculty Sci. Kyushu Univ. Ser. A} {\bf 25}
(1971), 197--235.

\bibitem{mar1} M. Joita, {\it Hilbert modules over locally
C*-algebras}, University of Bucharest Press, 2006.
\bibitem{Joitacrossedpro} M. Joita, Crossed products of pro-C*-algebras and
Hilbert pro-C*-modules, {\it Bull. Malays. Math. Sci. Soc.} {\bf 38} (2015), 1053-–1065

\bibitem{Kad} R. V. Kadison, Derivations of operator algebras, {\it Ann. Math.}, {\bf{83}(2)} (1966), 280-293 .

\bibitem{Li} P. Li, D. Han and W. Tang, Derivations on the algebra of
operatores in Hilbert C*-modules, {\it Acta Math. Sinica}, English ser. {\bf 28} (8) (2012), 1615--1622.

\bibitem{MMJ} M. K. Moghadam, M. Miri and A. R. Janfada, A note on
derivations on the algebra of operators in Hilbert C*-modules,
{\it Mediterr. J. Math.} DOI 10.1007/s00009-015-0538-y, Published online 25 February 2015.

\bibitem{phil1} N. C. Phillips, Inverse limit of C*-algebras,
{\it J. Operator Theory} {\bf 19} (1988), 159--195.

\bibitem{phil2} N. C. Phillips, Inner derivations on $\sigma$-C*-algebras, {\it Math. Nachr.} {\bf 176}, (1995), 243-247.

\bibitem{SN} A. Sahleh and L. Najarpisheh, Derivations of operators on Hilbert modules, {\it Gen. Math. Notes} {\bf 24} (2014), 52--57.

\bibitem{SJM} H. Saidi, A. R. Janfada and M Mirzavaziri,
Kinds of derivations on Hilbert C*-modules and their operator algebras,
{\it Miskolc Mathematical Notes} {\bf 16} (2015), 453--461.

\bibitem{Sakai} S. Sakai, {\it Operator algebras in dynamical systems: The theory of unbounded derivations in C*-algebras},
Encyclipedia of Mathematics and its applications, 41, Cambridge University Press, 1991.

\bibitem {SHAMal} K. Sharifi,
Generic properties of module maps and characterizing inverse limits of
C*-algebras of compact operators,
{\it Bull. Malays. Math. Sci. Soc.} {\bf 36} (2013), 481–-489.

\bibitem{SM} I. M. Singer and J. Wermer, Derivations on commutative
normed algebras, {\it Math. Ann.} 126 (1955), 260-264.

\bibitem{Zar1} M. Weigt and I. Zarakas, A note on derivations of
pro-C*-algebras into complete locally convex bimodules, {\it New Zealand J. Math} {\bf 41} (2011), 143--152.

\bibitem{Zar} I. Zarakas, Hilbert pro-C*-bimodules and
applications, {\it Rev. Roumaine Math. Pures Appl.} {\bf 17} (2012), 289--310.
\end{thebibliography}
\end{document}